\newcommand{\PP}{\mathbb{P}}
\newtheorem{thm}{Theorem}
\newtheorem{lemma}{Lemma}
\numberwithin{equation}{section}
\numberwithin{thm}{section}
\numberwithin{lemma}{section}
\numberwithin{cor}{section}
\numberwithin{prop}{section}
\begin{document}

{
  \title{\bf Prime zeta function statistics and \\ Riemann zero-difference repulsion}
  \author{Gordon Chavez\footnote{gordon.v.chavez@gmail.com} \hspace{.1cm} and \hspace{.1cm} Altan Allawala}

\date{}
  \maketitle
} 

\begin{abstract}
We present a derivation of the numerical phenomenon that differences between the Riemann zeta function's nontrivial zeros tend to avoid being equal to the imaginary parts of the zeros themselves, a property called statistical "repulsion" between the zeros and their differences. Our derivation relies on the statistical properties of the prime zeta function, whose singularity structure specifies the positions of the Riemann zeros. We show that the prime zeta function on the critical line is asymptotically normally distributed with a covariance function that is closely approximated by the logarithm of the Riemann zeta function's magnitude on the 1-line. This creates notable negative covariance at separations approximately equal to the imaginary parts of the Riemann zeros. This covariance function and the singularity structure of the prime zeta function combine to create a conditional statistical bias at the locations of the Riemann zeros that predicts the zero-difference repulsion effect. Our method readily generalizes to describe similar effects in the zeros of related L-functions.
\end{abstract}

AMS 2010 Subject Classification: 11M06, 11M26

\tableofcontents

\listoftheorems

\newpage

\section{Introduction}
Numerical evidence from multiple sources has shown a notable phenomenon in the famous nontrivial zeros of the Riemann zeta function. It appears that differences between the Riemann zeros tend to avoid being equal to the imaginary parts of the Riemann zeros themselves. This long-range "repulsion effect" has been mentioned in Snaith's (2010) review and it was recently discovered in histograms of Riemann zero differences by Perez Marco (2011). Clear numerical evidence for this repulsion effect is visible in Figure \ref{fig0}, where a histogram of differences for the first 100,000 Riemann zeros clearly shows troughs around the imaginary parts of the Riemann zeros themselves.

\begin{figure}
\includegraphics[width=\linewidth]{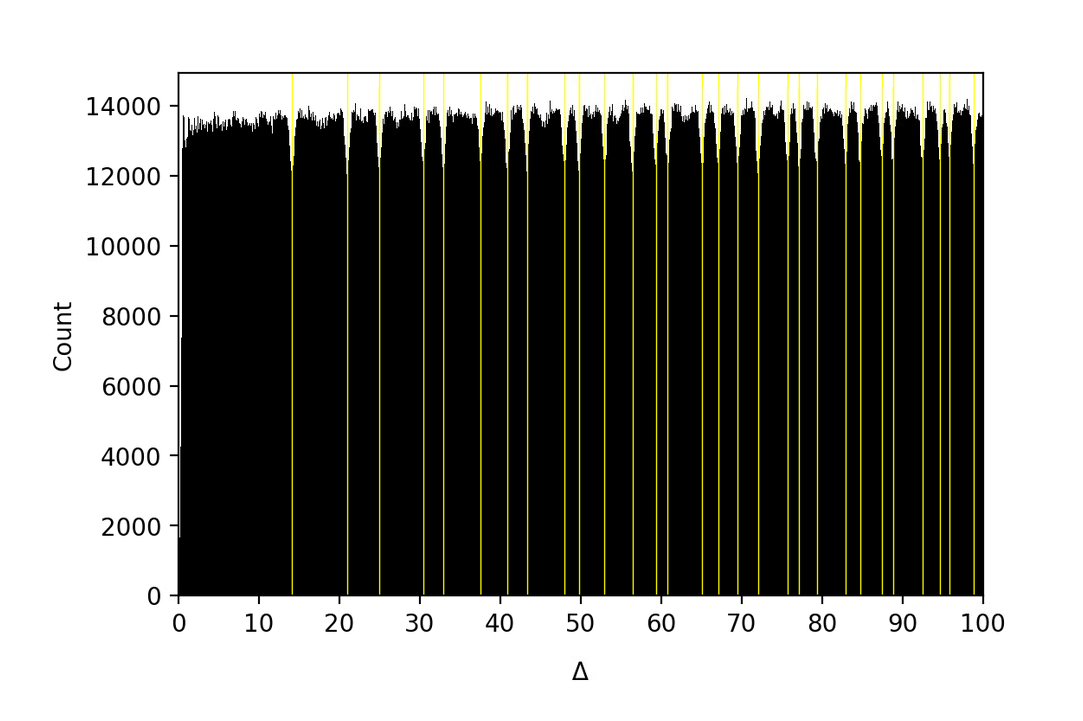}
\caption{A histogram of the differences for the first 100,000 Riemann zeros. Vertical lines are positioned at values of $\Delta$ such that $\zeta(1/2+i\Delta)=0$. Zeros were computed by Odlyzko (see e.g. 1988).}
\label{fig0}
\end{figure}

The study of the Riemann zeros' statistical properties is closely connected to the study of random matrices and quantum systems---in particular, systems with classical limits that exhibit chaotic dynamics. Hilbert and Polya first proposed that the Riemann zeros could be the eigenvalues of a linear, self-adjoint operator. This approach gained momentum when Dyson famously recognized that Montgomery's (1973) predictions for the Riemann zeros' correlations matched the correlations observed in the eigenvalues of random Hermitian matrices. Random matrices like these were likely familiar to Dyson from their use as Hamiltonians of large atomic nuclei. Odlyzko (1987) then showed that the empirical short range statistics of the Riemann zeros indeed matched those of such random matrix eigenvalues. 

The statistical properties of these random matrices follow the Gaussian Unitary Ensemble (GUE), which is known to describe quantum systems whose dynamics in the classical limit are chaotic and time-irreversible (Berry 1986). This connection to quantum chaos was strengthened by considering the Gutzwiller trace formula (1971), a semi-classical relation between a quantum system's energy eigenvalues and the corresponding classical system's periodic orbits. The famous Riemann-von Mangoldt explicit formula (1859), which relates the prime numbers and Riemann zeros, is notably analogous to Gutzwiller's trace formula, with the Riemann zeros playing the role of eigenvalues and the prime numbers playing the role of classical periodic orbits. Berry (1988) leveraged this analogy to provide more accurate predictions of long-range statistics in the Riemann zeros. The analogy is made still more striking and more precise by considering the Selberg zeta function (1956), a function describing dynamics generated by discrete groups on hyperbolic surfaces with constant curvature, whose structural similarity to the Riemann zeta function strongly suggests that the latter describes a quantum chaotic system on a constant negative curvature surface (Bogomolny 2007).

Bogomolny \& Keating (1996) utilized this analogy along with the Hardy-Littlewood conjecture from number theory to predict the zero-difference repulsion effect observed in Figure \ref{fig0}. Conrey \& Snaith (2008) then provided support for Bogomolny \& Keating's result by showing its derivation from a conjecture on the ratios of L-functions given by Conrey, Farmer, \& Zirnbauer (2008). Rodgers (2013) gave further support for Bogomolny \& Keating's result under the Riemann Hypothesis, and Ford and Zaharescu (2015) provided unconditional, number theoretic proof of the zero-difference repulsion effect.

In this paper we present an alternative, probabilistic derivation of this repulsion effect using the statistical properties of finite prime Dirichlet series. By a statistical independence property of the prime numbers, these series behave like sums of independent random variables, allowing us to invoke the central limit theorem for our purposes. We reach a related conclusion to previous research, showing that the repulsion effect originates from the influence of $\zeta(1+i\Delta)$. We show that the statistical properties of the \textit{prime zeta function} on the critical line, $P(1/2+i\tau)$, are described by a covariance function closely approximated by $\log \left|\zeta(1+i\Delta)\right|$. This produces negative covariance at separations $\Delta$ approximately equal to the imaginary parts of the Riemann zeros. By an important relation between $P(s)$ and $\log \zeta(s)$, the singularity structure of $P(1/2+i\tau)$ determines the location of Riemann zeros on the critical line. We then show that the combination of $P(1/2+i\tau)$'s singularity structure and covariance structure creates a conditional bias at the Riemann zeros, which predicts the zero-difference repulsion effect. We next describe how our methodology readily generalizes to predict the same effect in the complex zeros of related L-functions, and we close by noting our method's potential application to the Selberg zeta function and quantum chaotic systems.

\section{Covariance in Complex Prime Sums}
\label{sec1}

We first study the statistical properties of finite prime Dirichlet series with the form
\begin{equation}
X_{t}(\tau)=\sum_{p \leq t}a_{p}e^{i \left(\tau \log p+\theta_{p}\right)} \label{formula}
\end{equation} 
where the $a_{p}>0$ are real numbers indexed by the prime numbers $p$, $\tau \in \mathbb{R}$, and $\theta_{p} \in [0, 2\pi)$. It is clear that $EX_{t}(\tau)=0$ for $\tau$ uniformly distributed in $[T,2T]$ with $T\rightarrow \infty$. We will focus on (\ref{formula})'s covariance function, defined over separations $\Delta$, which is given by another such prime Dirichlet series.

\begin{thm}
Suppose $\tau$ is uniformly distributed in $[T,2T]$ with $T\rightarrow \infty$. Then (\ref{formula})'s summands are independent and for all $\Delta \in \mathbb{R}$
\begin{equation}
2R_{t}(\Delta)=\sum_{p \leq t}a_{p}^{2}\cos\left(\Delta \log p\right) \label{covariance}
\end{equation}
where
\begin{equation}
R_{t}(\Delta)=E\left\{\textnormal{Re}X_{t}(\tau+\Delta)\textnormal{Re}X_{t}(\tau)\right\}=E\left\{\textnormal{Im}X_{t}(\tau+\Delta)\textnormal{Im}X_{t}(\tau)\right\}. \label{covariancedefn}
\end{equation}
\label{thm0}
\end{thm}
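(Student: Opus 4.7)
The plan is to separate the two assertions: independence of the summands is a consequence of equidistribution, while the covariance identity is a direct Fourier-style calculation using product-to-sum formulas and the fact that $\cos(\alpha\tau + \beta)$ averages to zero over $[T,2T]$ whenever $\alpha \neq 0$ and $T \to \infty$.

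For independence, the key input is that the sequence $\{\log p : p \text{ prime}\}$ is linearly independent over $\mathbb{Q}$. Indeed, any nontrivial rational relation $\sum c_p \log p = 0$ with $c_p \in \mathbb{Z}$ would, upon exponentiation, yield a nontrivial factorization identity among the primes, contradicting the Fundamental Theorem of Arithmetic. Given this, I would invoke Weyl's equidistribution criterion applied to the map $\tau \mapsto (\tau \log p_1, \ldots, \tau \log p_k) \bmod 2\pi$: for any integer vector $(n_1,\ldots,n_k)\neq 0$, the character $\exp(i\tau \sum n_j \log p_j)$ has nonzero frequency $\sum n_j \log p_j$, so its average over $[T,2T]$ is $O(1/T)$. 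Hence the joint distribution of the phases $(\tau \log p_1,\ldots,\tau\log p_k) \bmod 2\pi$ converges to the uniform measure on the torus $\mathbb{T}^k$. Since each summand $a_p e^{i(\tau \log p + \theta_p)}$ is a continuous function of a single coordinate only (the constant shifts $\theta_p$ do not alter marginals), the summands are asymptotically jointly independent, each with the uniform distribution on the circle of radius $a_p$.

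For the covariance identity, I would expand the product directly:
\begin{equation*}
\mathrm{Re}\,X_t(\tau+\Delta)\,\mathrm{Re}\,X_t(\tau) = \sum_{p,q\leq t} a_p a_q \cos\bigl((\tau+\Delta)\log p + \theta_p\bigr)\cos\bigl(\tau\log q + \theta_q\bigr),
\end{equation*}
then apply $\cos A \cos B = \tfrac{1}{2}\bigl[\cos(A-B) + \cos(A+B)\bigr]$ to write each term as a sum of two cosines with linear-in-$\tau$ arguments. The sum-frequency term has coefficient $\log p + \log q > 0$ and therefore averages to zero as $T\to\infty$. The difference-frequency term has coefficient $\log p - \log q$, which vanishes precisely when $p = q$; for $p \neq q$ this term also averages to zero, while for $p = q$ the $\tau$-dependence disappears and leaves $\cos(\Delta \log p)$. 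Collecting surviving terms gives $R_t(\Delta) = \tfrac{1}{2}\sum_{p\leq t} a_p^2 \cos(\Delta\log p)$. The imaginary-part case runs symmetrically, with the sign in the product-to-sum step flipped, but the same two families of terms survive and vanish, yielding the same value.

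The main subtlety I anticipate is the passage from equidistribution of finite marginals to a statement about the whole sum, and the proper interpretation of the "expectation" as a Cesàro-type limit $\lim_{T\to\infty}\frac{1}{T}\int_T^{2T}\cdot\,d\tau$. Strictly speaking, for fixed $T$ the summands are not independent, so the independence claim must be read as an asymptotic/limiting statement; once framed this way, it is justified by the Weyl equidistribution argument above. The covariance computation itself is safer because only bilinear expectations are needed, and these reduce to the elementary averaging of $\cos(\alpha\tau+\beta)$, which is uniform in $\Delta$ and requires no appeal to higher-order joint distributions.
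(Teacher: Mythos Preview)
Your proposal is correct. The covariance computation is essentially the paper's: both expand the double sum, apply product-to-sum, and use that $\cos(\alpha\tau+\beta)$ averages to zero over $[T,2T]$ whenever $\alpha\neq 0$; the only cosmetic difference is that the paper first separates the off-diagonal $p\neq q$ terms and kills them by the already-established independence, whereas you kill them directly by noting $\log p-\log q\neq 0$.

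The independence argument, however, follows a genuinely different route. The paper computes the characteristic function of $\textnormal{Im}\,X_t(\tau)$ via the Jacobi--Anger expansion $e^{ix\sin\phi}=\sum_n J_n(x)e^{in\phi}$, obtaining a sum over integer tuples $(n_1,\dots,n_N)$ in which only the term with $n_1\log p_1+\cdots+n_N\log p_N=0$ survives the averaging; unique factorization forces $n_1=\cdots=n_N=0$, and the characteristic function collapses to the product $\prod_{p\le t}J_0(a_p\lambda)$, i.e.\ the product of the individual characteristic functions. You instead invoke Weyl's criterion to show that $(\tau\log p_1,\dots,\tau\log p_k)\bmod 2\pi$ equidistributes on $\mathbb{T}^k$, from which joint independence of the summands follows. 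Both arguments hinge on the same number-theoretic input (rational linear independence of the $\log p$), but the paper's characteristic-function approach yields the explicit factorization $\varphi_{\textnormal{Im}X_t}=\prod_p J_0(a_p\lambda)$ as a byproduct, while your Weyl argument is more elementary and avoids special functions entirely. Your closing remark about interpreting ``independence'' as a limiting statement is apt and matches how the paper implicitly treats it.
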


\begin{proof}

We first derive the characteristic function of (\ref{formula})'s imaginary part. Recall that a random variable $x$'s characteristic function is defined 
\begin{equation}
\varphi_{x}(\lambda)=E\left\{e^{i\lambda x}\right\}. \label{chf}
\end{equation} 
We substitute (\ref{formula})'s imaginary part into (\ref{chf}) to write
\begin{equation}
\varphi_{\textnormal{Im}X_{t}}(\lambda)=E\left\{ \prod_{p \leq t}\exp\left( i a_{p}\lambda \sin\left(\tau \log p+\theta_{p}\right)\right) \right\}. \label{app1step00}
\end{equation}
We expand (\ref{app1step00}) using the Bessel function identity 
\begin{equation}
e^{ix \sin \phi}=\sum_{n=-\infty}^{\infty}J_{n}\left(x\right)e^{i n\phi} \label{besselidsin}
\end{equation}
where $J_{n}(.)$ is the $n$th-order Bessel function of the first kind (Laurin{\v c}ikas 1996). This gives
\begin{equation}
\varphi_{\textnormal{Im}X_{t}}\left(\lambda\right)=E\left\{\sum_{n_{1},...,n_{N}}J_{n_{1}}\left(a_{p_{1}}\lambda\right)...J_{n_{N}}\left(a_{p_{N}}\lambda\right)e^{i\left(n_{1}\theta_{p_{1}}+...+n_{N}\theta_{p_{N}}\right)}e^{i \tau\left(n_{1}\log p_{1}+...+n_{N}\log p_{N}\right)}\right\}. \label{big}
\end{equation}
The exponential terms on (\ref{big})'s far right-hand side are unit circle rotations with $\tau$. Therefore taking the expected value will cause all terms to vanish except those for which 
\begin{equation}
n_{1}\log p_{1}+...+n_{N}\log p_{N}=0. \label{zero}
\end{equation}
However, by unique-prime-factorization, the $\log p$'s are linearly independent over the rational numbers. Therefore the only solution to (\ref{zero}) is given by 
\begin{equation}
n_{1}=n_{2}=...=n_{N}=0. \label{soln}
\end{equation}
This simplifies (\ref{big}) to give
\begin{equation}
\varphi_{\textnormal{Im}X_{t}}\left(\lambda\right)=\prod_{p \leq t} J_{0}\left(a_{p}\lambda\right). \label{result}
\end{equation}
We next note from (\ref{chf}) and (\ref{besselidsin}) that the characteristic function for a single summand in (\ref{formula})'s imaginary part, $a_{p}\sin(\tau \log p+\theta_{p})$, is given by 
\begin{equation}
\varphi_{p}\left(\lambda\right)=J_{0}\left(a_{p}\lambda\right). \label{littleresult}
\end{equation}
Therefore, by (\ref{result}) and (\ref{littleresult}), 
\begin{equation}
\varphi_{\textnormal{Im}X_{t}}\left(\lambda\right)=\prod_{p\leq t} \varphi_{p}\left(\lambda\right). \label{independence}
\end{equation}
This proves that the summands of (\ref{formula})'s imaginary part are statistically independent. Essentially equivalent reasoning using the identity
\begin{equation}
e^{ix \cos \phi}=\sum_{n=-\infty}^{\infty} i^{n}J_{n}\left(x\right)e^{i n\phi}
\label{besselidcos}
\end{equation}
gives equivalent results for (\ref{formula})'s real part.
We next compute the covariance function for (\ref{formula})'s imaginary part. We first write  
\begin{align}
\textnormal{Im}X_{t}(\tau+\Delta)\textnormal{Im}X_{t}(\tau)=\nonumber \\ \sum_{p \leq t}a_{p}^{2}\sin\left((\tau+\Delta)\log p+\theta_{p}\right)\sin \left(\tau \log p+\theta_{p}\right)+2\sum_{\substack{p \neq q \\ p,q \leq t}}a_{p}a_{q}\sin\left((\tau+\Delta)\log p+\theta_{p}\right)\sin \left(\tau \log q+\theta_{q}\right).
\label{covstep1}
\end{align}
By the independence of (\ref{formula})'s summands from (\ref{result})-(\ref{independence}), the expected value of (\ref{covstep1})'s last summation vanishes. We then note that
\begin{equation}
\sin\left((\tau+\Delta)\log p+\theta_{p}\right)\sin \left(\tau \log p+\theta_{p}\right)=\frac{1}{2}\left(\cos\left(\Delta \log p\right)-\cos\left(\left(2\tau+\Delta \right)\log p +2\theta_{p}\right)\right). \label{covstep2term}
\end{equation}
The expected value of (\ref{covstep2term})'s second term vanishes. Applying this reasoning to (\ref{covstep1})'s first summation gives the result (\ref{covariance}). Essentially equivalent reasoning with the identity
\begin{equation}
\cos\left((\tau+\Delta)\log p+\theta_{p}\right)\cos \left(\tau \log p+\theta_{p}\right)=\frac{1}{2}\left(\cos\left(\Delta \log p\right)+\cos\left(\left(2\tau+\Delta \right)\log p +2\theta_{p}\right)\right)
\end{equation}
gives the result (\ref{covariance}) for (\ref{formula})'s real part as well.
\end{proof}
\hspace{-.6cm}Theorem \ref{thm0} shows that, for each prime $p$, the $p^{i\tau}$ are independent random variables. This is a direct consequence of unique-prime-factorization, which is otherwise known as the fundamental theorem of arithmetic. This independence property enables the straightforward evaluation of (\ref{formula})'s covariance function for separations $\Delta$.

Next we will apply Theorem \ref{thm0}'s results to study the repulsion effect in the nontrivial zeros of the Riemann zeta function. This will ultimately give results describing differences $\Delta$ between Riemann zeros. We stress that $\Delta$ in this context will \textit{not} indicate differences between \textit{consecutive} Riemann zeros, but rather, any two zeros on the critical line.

\section{Application to the Riemann Zeros}
\label{secapp}
We use the notation $P_{t}(s)=\sum_{p \leq t}p^{-s}$ and we denote the case with infinite $t$ as simply $P(s)$, which is the prime zeta function. We consider the particular series 
\begin{equation}
P_{t}(1/2+i\tau)=\sum_{p\leq t}\frac{1}{p^{1/2+i\tau}}, \label{logzetao1}
\end{equation}
which is an important object for research on the Riemann zeta function's behavior on the critical line (see e.g. Fyodorov \& Keating 2014, Arguin \& Tai 2019). We will apply the previous section's results to study (\ref{logzetao1})'s statistical properties. We first use the results of Section \ref{sec1} to show that (\ref{logzetao1})'s real and imaginary parts are asymptotically normally distributed.
\begin{lemma}
For $\tau$ uniformly distributed in $[T,2T]$ with $T\rightarrow \infty$ and $t \rightarrow \infty$,
\begin{equation}
\frac{1}{\sqrt{\frac{1}{2}\log\log t}}\textnormal{Re}P_{t}(1/2+i\tau) \hspace{.2cm} \textnormal{and} \hspace{.3cm}\frac{1}{\sqrt{\frac{1}{2}\log\log t}}\textnormal{Im}P_{t}(1/2+i\tau) \xrightarrow[]{d} \mathcal{N}(0,1). \label{theyrenormal}
\end{equation}\label{propnormal}
\end{lemma}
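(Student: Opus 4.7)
The plan is to apply the independence and covariance results of Theorem~\ref{thm0} directly, with the choice $a_p=1/\sqrt{p}$ and $\theta_p=0$, and then invoke a standard central limit theorem. Specifically, $\operatorname{Re}P_t(1/2+i\tau)=\sum_{p\leq t}p^{-1/2}\cos(\tau\log p)$ and $\operatorname{Im}P_t(1/2+i\tau)=-\sum_{p\leq t}p^{-1/2}\sin(\tau\log p)$ each fall into the form (\ref{formula}), so Theorem~\ref{thm0} tells us both sums have independent summands, all with mean zero.

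Next I would compute the variance using (\ref{covariance}) at $\Delta=0$, which gives
\begin{equation*}
\operatorname{Var}\bigl(\operatorname{Re}P_t(1/2+i\tau)\bigr)=\operatorname{Var}\bigl(\operatorname{Im}P_t(1/2+i\tau)\bigr)=R_t(0)=\tfrac{1}{2}\sum_{p\leq t}\tfrac{1}{p}.
\end{equation*}
By Mertens' theorem, $\sum_{p\leq t}1/p=\log\log t+M+o(1)$, so this variance is $\tfrac{1}{2}\log\log t+O(1)$, matching the normalizing constant in (\ref{theyrenormal}).

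For the asymptotic normality itself, the cleanest route is to use the explicit characteristic function formula (\ref{result}) already derived in the proof of Theorem~\ref{thm0}. With $\sigma_t^2=\tfrac{1}{2}\log\log t$ and $\lambda_p=\lambda/(\sqrt{p}\,\sigma_t)$, one has
\begin{equation*}
\log\varphi_{\operatorname{Im}P_t/\sigma_t}(\lambda)=\sum_{p\leq t}\log J_0(\lambda_p).
\end{equation*}
Since $\sigma_t\to\infty$, for $t$ large enough every $\lambda_p$ lies in any prescribed small neighborhood of $0$, and I would expand $\log J_0(x)=-x^2/4+O(x^4)$ uniformly in that neighborhood. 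The quadratic term summed over $p\leq t$ yields $-\lambda^2/(4\sigma_t^2)\cdot\sum_{p\leq t}1/p\to-\lambda^2/2$ by Mertens, while the quartic remainder is bounded by $C\lambda^4\sigma_t^{-4}\sum_{p}p^{-2}=O((\log\log t)^{-2})\to 0$. Thus $\varphi_{\operatorname{Im}P_t/\sigma_t}(\lambda)\to e^{-\lambda^2/2}$, and L\'evy's continuity theorem delivers the claimed convergence in distribution. The same argument applied to the Bessel product for the real part (via (\ref{besselidcos})) handles $\operatorname{Re}P_t$.

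The only delicate point is making the Taylor expansion of $\log J_0$ rigorous uniformly over all primes $p\leq t$; this is handled by noting that $\sup_{p\geq 2}\lambda_p=\lambda/(\sqrt{2}\,\sigma_t)\to 0$, so eventually every argument is small and the remainder bound is uniform. Alternatively, one could bypass the characteristic function machinery entirely and apply Lyapunov's CLT to the independent summands, since $\sum_{p\leq t}E|p^{-1/2}\cos(\tau\log p)|^3\leq\sum_p p^{-3/2}<\infty$ while $\sigma_t^3\to\infty$, so the Lyapunov ratio tends to zero. Either route works, but the characteristic-function approach is most natural here because the Bessel product (\ref{result}) is already in hand from Theorem~\ref{thm0}.
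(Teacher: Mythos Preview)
Your proposal is correct. The variance computation via $R_t(0)$ and Mertens' theorem is identical to the paper's. For the normality step, however, the paper takes your \emph{alternative} route rather than your primary one: it applies Lyapunov's CLT directly (with exponent $\delta=2$, i.e.\ fourth moments), checking that
\[
\frac{1}{\bigl(\tfrac{1}{2}P_t(1)\bigr)^{2}}\sum_{p\leq t}E\Bigl[\bigl(p^{-1/2}\cos(\tau\log p)\bigr)^{4}\Bigr]=O\!\left(\frac{1}{\log^{2}\log t}\right)\to 0,
\]
and never revisits the Bessel product~(\ref{result}). Your characteristic-function argument---expanding $\log J_0(x)=-x^{2}/4+O(x^{4})$ uniformly since $\sup_p\lambda_p\to 0$ and invoking L\'evy continuity---is a genuinely different and self-contained route that exploits the explicit formula already obtained in Theorem~\ref{thm0}; it has the virtue of making the Gaussian limit visible without citing an off-the-shelf CLT. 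Your Lyapunov alternative with third moments ($\sum_p p^{-3/2}<\infty$ against $\sigma_t^{3}\to\infty$) is essentially the paper's argument with $\delta=1$ in place of $\delta=2$, and is in fact slightly cleaner since the third absolute moment needs no exact evaluation.
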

\begin{proof}
We note from (\ref{covariance}) that the variance of (\ref{logzetao1})'s real and imaginary parts is given by
\begin{equation}
\textnormal{var}\left\{\textnormal{Re}X_{t}(\tau)\right\}=\textnormal{var}\left\{\textnormal{Im}X_{t}(\tau)\right\}=R_{t}(0)=\frac{1}{2}\sum_{p \leq t}a_{p}^{2}. \label{appstatvar1}
\end{equation}
Setting $a_{p}=1/p^{1/2}$ then shows that 
\begin{equation}
\textnormal{var}\left\{\textnormal{Re}P_{t}(1/2+i\tau)\right\}=\textnormal{var}\left\{\textnormal{Im}P_{t}(1/2+i\tau)\right\}=\frac{1}{2}P_{t}(1)=\frac{1}{2}\log\log t+O(1) \label{appstatvar2}
\end{equation} 
by Mertens' 2nd theorem. We next note that since $P_{t}(1/2+i\tau)$'s summands are independent by Theorem \ref{thm0}, we may apply Lyapunov's Central Limit Theorem (see e.g. Billingsley 1995) with $\delta=2$, using (\ref{appstatvar2}) to give
\begin{eqnarray}
\frac{1}{\left(\frac{1}{2}P_{t}(1)\right)^{2}}\sum_{p \leq t}E\left\{\left(\frac{\cos(\tau\log p)}{p^{1/2}}\right)^{4}\right\}=\frac{1}{\left(\frac{1}{2}P_{t}(1)\right)^{2}}\sum_{p \leq t}E\left\{\left(\frac{\sin(\tau\log p)}{p^{1/2}}\right)^{4}\right\}\nonumber \\=\frac{C}{\left(\frac{1}{2}P_{t}(1)\right)^{2}}\sum_{p \leq t}\frac{1}{p^{2}}=\frac{O(1)}{\frac{\log^{2}\log t}{4}+O(\log \log t)}=O\left(\frac{1}{\log^{2}\log t}\right), \label{lyapunov}
\end{eqnarray}
where $C=E\cos^{4}(\omega \tau)=E\sin^{4}(\omega \tau)=12/32$ for arbitrary $\omega \in \mathbb{R}$. This completes the proof of (\ref{theyrenormal}), since (\ref{lyapunov}) vanishes as $t \rightarrow \infty$.
\end{proof}

The result (\ref{theyrenormal}) is quite similar to the well-known central limit theorem proven by Selberg (1946). This can be explained for the real part by the important fact that, with the proper truncation $t$, $\textnormal{Re}P_{t}\left(1/2+i\tau\right)$ converges in mean square and hence converges in probability to $\textnormal{Re}P\left(1/2+i\tau\right)$. More precisely, it can be shown from results given by Radziwill \& Soundararajan (2017) that
\begin{equation}
\textnormal{Re}P_{t^{X(t)}}(1/2+i\tau) \xrightarrow[]{p} \textnormal{Re}P(1/2+i\tau)\label{thankgod}
\end{equation}
as $t \rightarrow \infty$, where $t\leq \tau \leq 2t$ and $X(t)=1/(\log\log\log t)^{2}$. We will make use of this result below.

Since (\ref{logzetao1}) is asymptotically normally distributed, its statistical dependence structure is entirely described by its covariance function as $t \rightarrow \infty$. By (\ref{covariance}), the covariance function for (\ref{logzetao1}) is given by
\begin{equation}
2R_{t}(\Delta)=\sum_{p \leq t}\frac{\cos(\Delta \log p)}{p}=\textnormal{Re}P_{t}\left(1+i\Delta\right). \label{logzetavar}
\end{equation}
Numerical evidence suggests that the behavior of the series (\ref{logzetavar}) is closely connected to the imaginary parts of the Riemann zeros. This connection is visually evident in Figure \ref{fig2}, where we have computed (\ref{logzetavar}) using the first 1 million primes. It is clear that (\ref{logzetavar}) has minima approximately positioned at values of $\Delta$ such that $\zeta(1/2+i\Delta)=0$. These minima correspond to notable \textit{anti-correlations} over such separations $\Delta$. It is also clear that (\ref{logzetavar}) very closely follows the function $\log \left|\zeta(1+i\Delta)\right|$.

\begin{figure}
  \includegraphics[width=\linewidth]{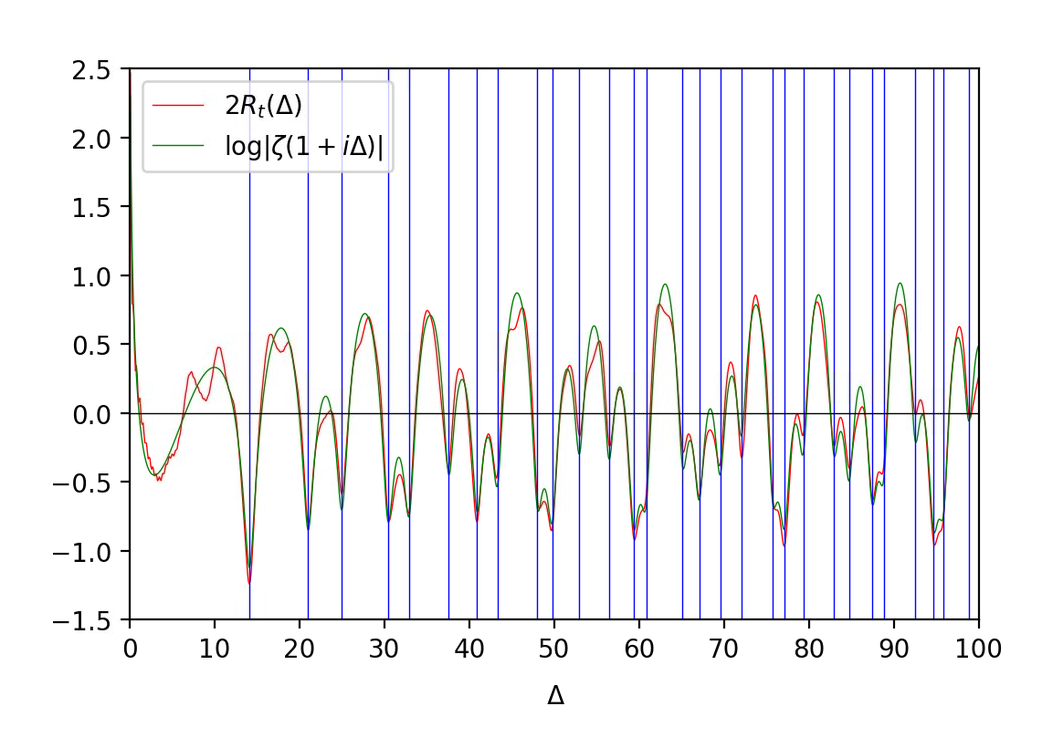}
  \caption{A graph of (\ref{logzetavar})'s $2 R_{t}(\Delta)$ with $t=p_{1,000,000}$ (Red) along with $\log \left|\zeta(1+i\Delta)\right|$ (Green) for $0\leq \Delta\leq 100$. Vertical lines are positioned at values of $\Delta$ such that $\zeta(1/2+i \Delta)=0$.}
  \label{fig2}
\end{figure}

To explain this phenomenon, we first show that (\ref{logzetavar}) converges in mean square and hence converges in probability to the full prime zeta function on the 1-line, $P(1+i\Delta)$, very quickly as $t$ increases.
\begin{lemma}
Suppose $\Delta$ is uniformly distributed in $[0,T]$ with $T\rightarrow \infty$. Then 
\begin{equation}
E\left\{\left(\textnormal{Re}P_{t}\left(1+i\Delta\right)-\textnormal{Re}P\left(1+i\Delta\right)\right)^{2}\right\}=\frac{P(2)-P_{t}(2)}{2} \rightarrow 0 \label{easyprop2}
\end{equation} \label{lemma1}
as $t \rightarrow \infty$.
\end{lemma}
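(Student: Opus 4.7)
The plan is to view the calculation as a direct application of the machinery developed in Theorem \ref{thm0}. First I would write the error as a tail prime Dirichlet series:
$$\textnormal{Re}P_{t}(1+i\Delta) - \textnormal{Re}P(1+i\Delta) = -\sum_{p>t}\frac{\cos(\Delta \log p)}{p},$$
which is the real part of a sum of the form (\ref{formula}) with coefficients $a_{p}=1/p$, phases $\theta_{p}=0$, and $\Delta$ playing the role of the frequency variable. The only cosmetic differences from Section \ref{sec1} are that the sum runs over $p>t$ rather than $p\leq t$, and that the uniform distribution is on $[0,T]$ rather than $[T,2T]$; neither change affects the relevant computations, since the argument of Theorem \ref{thm0} only requires that $E\{e^{i\omega\Delta}\}\to 0$ as $T\to\infty$ for each fixed $\omega\neq 0$.

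Next I would compute the second moment by squaring the series and taking expectations term by term. Expanding the product of cosines via the product-to-sum identity yields a double sum with terms of the form $\cos(\Delta(\log p\pm\log q))/(2pq)$. By unique-prime-factorization the frequencies $\log p\pm\log q$ are nonzero whenever $p\neq q$ (and $\log p+\log q$ is always positive), so, exactly as in Theorem \ref{thm0}, every off-diagonal contribution vanishes in expectation. The diagonal $p=q$ contributes $E\{\cos^{2}(\Delta\log p)\}/p^{2} = 1/(2p^{2})$ from each prime, giving
$$E\Big\{\Big(\textnormal{Re}P_{t}(1+i\Delta)-\textnormal{Re}P(1+i\Delta)\Big)^{2}\Big\} = \frac{1}{2}\sum_{p>t}\frac{1}{p^{2}} = \frac{P(2)-P_{t}(2)}{2}.$$
Since $P(2)\leq \zeta(2)<\infty$, the right-hand side vanishes as $t\to\infty$, and the conclusion of the lemma follows.

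The main technical point to verify is the legitimacy of exchanging the expectation with the double sum, since the series $\sum_{p>t}\cos(\Delta\log p)/p$ does not converge absolutely on the line $\textnormal{Re}\, s=1$. The cleanest way to handle this is to introduce an auxiliary truncation $N$: apply the argument above to the finite partial sum $\sum_{t<p\leq N}\cos(\Delta\log p)/p$ to obtain second moment $(P_{N}(2)-P_{t}(2))/2$, and then pass to $N\to\infty$. The resulting Cauchy estimate shows that the partial sums converge in $L^{2}(\Delta)$, the limit must agree with $\textnormal{Re}P(1+i\Delta)-\textnormal{Re}P_{t}(1+i\Delta)$ (in the same almost-sure sense in which $P(1+i\Delta)$ is defined by its partial sums), and the stated identity follows by monotone convergence of $\sum_{p}1/p^{2}$. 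This justification of the interchange is the only point that requires care; the combinatorial heart of the estimate is the same vanishing-cross-term calculation that underpins Theorem \ref{thm0}.
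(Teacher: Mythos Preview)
Your argument is correct and follows essentially the same route as the paper: write the error as the tail series $\sum_{p>t}\cos(\Delta\log p)/p$, expand the square, kill the off-diagonal terms via the orthogonality $E\cos(\omega\Delta)=0$ (equivalently the independence from Theorem~\ref{thm0}), and evaluate the diagonal with $E\cos^{2}(\omega\Delta)=1/2$. Your additional remark about justifying the interchange of expectation and infinite summation via an auxiliary truncation is a point the paper does not make explicit, but otherwise the proofs are the same.
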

\begin{proof}
Recall the independence of (\ref{formula})'s summands, that $E\cos\left(\omega \Delta\right)=0$, and $E\cos^{2}\left(\omega \Delta\right)=1/2$ for arbitrary $\omega$ to write
\begin{eqnarray}
E\left\{\left(\textnormal{Re}P(1+i\Delta)-\textnormal{Re}P_{t}(1+i\Delta)\right)^{2}\right\}=E\left\{\left(\sum_{p>t}\frac{\cos\left(\Delta \log p\right)}{p}\right)^{2}\right\}\nonumber \\
=E\left\{\sum_{p>t}\frac{\cos^{2}\left(\Delta \log p\right)}{p^{2}}\right\}+2E\left\{\sum_{\substack{p \neq q \\ p,q > t}}\frac{\cos\left(\Delta \log p\right)\cos\left(\Delta \log q\right)}{pq}\right\} \nonumber =\frac{1}{2}\sum_{p > t}\frac{1}{p^{2}},
\end{eqnarray}
which completes the proof of (\ref{easyprop2}).  
\end{proof}
\hspace{-.6cm}To demonstrate how quickly the mean squared error (\ref{easyprop2}) decreases, we note that its value for $t=100$ is less than .002 and for $t=1000$ is less than .0002. We next prove the following result for $P\left(1+i\Delta\right)$.
\begin{lemma}
For all $\Delta \in \mathbb{R}$, 
\begin{equation}
P\left(1+i\Delta\right)=\log \left|\zeta(1+i\Delta)\right|-\varepsilon\left(\Delta\right), \label{zetaconnectionreal}
\end{equation}
\label{zeta1cov}
where $\left|\varepsilon\left(\Delta\right)\right|<1-\gamma \approx .422784$. \label{lemma2}
\end{lemma}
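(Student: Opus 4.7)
The plan is to start from the Euler product representation of $\log \zeta(s)$, identify $\textnormal{Re}\,P(1+i\Delta)$ with $\log|\zeta(1+i\Delta)|$ up to an explicit tail, and then dominate that tail by a closed-form quantity involving $\gamma$.

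First I would write, for $\textnormal{Re}(s) > 1$,
\begin{equation*}
\log\zeta(s) \;=\; \sum_p\sum_{n=1}^\infty \frac{1}{np^{ns}} \;=\; P(s) + \sum_{n=2}^\infty \frac{P(ns)}{n},
\end{equation*}
using $-\log(1-x) = \sum_{n\geq 1}x^n/n$ inside the Euler product. The tail $\sum_{n\geq 2}P(ns)/n$ converges absolutely on the closed half-plane $\textnormal{Re}(s)\geq 1$, since $|P(ns)|\leq P(n)\leq \zeta(n)-1$ and $\sum_{n\geq 2}(\zeta(n)-1)/n<\infty$. Because $\zeta$ is non-vanishing on the line $\textnormal{Re}(s)=1$ away from $s=1$, the identity extends continuously to $s=1+i\Delta$ (with $\Delta\neq 0$), consistent with the mean-square definition of $P(1+i\Delta)$ afforded by (\ref{thankgod}). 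Taking real parts and using $\textnormal{Re}\,\log\zeta = \log|\zeta|$ gives
\begin{equation*}
\textnormal{Re}\,P(1+i\Delta) \;=\; \log|\zeta(1+i\Delta)| - \varepsilon(\Delta), \qquad \varepsilon(\Delta) \;:=\; \textnormal{Re}\sum_{n=2}^\infty \frac{P(n(1+i\Delta))}{n}.
\end{equation*}

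To bound $|\varepsilon(\Delta)|$ I would apply $|P(n+in\Delta)|\leq P(n)$ termwise and then replace primes by all integers $\geq 2$, a strict loss because of composites:
\begin{equation*}
|\varepsilon(\Delta)| \;\leq\; \sum_{n=2}^\infty \frac{P(n)}{n} \;<\; \sum_{n=2}^\infty \frac{\zeta(n)-1}{n}.
\end{equation*}
Swapping the two nonnegative sums and using $-\log(1-1/k) - 1/k = \sum_{n\geq 2}1/(nk^n)$ converts the right-hand side into $\sum_{k=2}^\infty \bigl(-\log(1-1/k)-1/k\bigr)$, which telescopes: writing $-\log(1-1/k)=\log k - \log(k-1)$, the partial sum through $k=N$ equals $\log N - (H_N - 1)$, whose limit is $1-\gamma$. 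Hence $|\varepsilon(\Delta)|<1-\gamma$, as claimed.

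The one subtle point is interpretational rather than technical: $P(1+i\Delta)$ has no unconditionally convergent Dirichlet-series definition, so one must either take it as defined by the Euler identity above (whose $n\geq 2$ tail converges absolutely, with $\log\zeta$ well-defined at $1+i\Delta$) or as the mean-square limit (\ref{thankgod}); both routes yield the same object and the same bound. Everything else reduces to the Euler product, absolute convergence, and the telescoping identity that pins the constant at $1-\gamma$.
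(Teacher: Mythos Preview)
Your argument is correct, and it reaches the same intermediate target $\sum_{n\ge 2}(\zeta(n)-1)/n=1-\gamma$ that the paper reaches, but by a genuinely different route at both stages. The paper works from the M\"obius-inverted identity $P(s)=\sum_{n\ge 1}\frac{\mu(n)}{n}\log\zeta(ns)$, so its error term is $\varepsilon(\Delta)=-\sum_{n\ge 2}\frac{\mu(n)}{n}\log\zeta(n(1+i\Delta))$, which it bounds via $|\log\zeta(n+in\Delta)|<\zeta(n)-1$ using the Taylor expansion of $\log(1+z)$; it then evaluates $\sum_{n\ge 2}(\zeta(n)-1)/n$ through the integral representations of $\zeta(n)$ and $\Gamma(n)$ and a classical integral for $\gamma$. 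You instead use the direct Euler expansion $\log\zeta(s)=\sum_{n\ge 1}P(ns)/n$, so your error term is $\sum_{n\ge 2}P(n(1+i\Delta))/n$, bounded termwise by $P(n)<\zeta(n)-1$; you then evaluate the constant by swapping sums and telescoping $\log k-\log(k-1)$ against $H_N-\log N\to\gamma$. Your version is a bit more elementary: the termwise bound $|P(ns)|\le P(n)$ is immediate, whereas the paper's step $|\log(1+z)|<|z|$ for $z=\zeta(n+in\Delta)-1$ needs some care for complex $z$; and your telescoping avoids the special-function integrals. The paper's route, on the other hand, makes the analytic continuation of $P$ via $\log\zeta$ explicit, which it reuses later in (\ref{gotit}).
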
 
\begin{proof}
We first note the following result relating $P(s)$ and $\log \zeta(s)$ for $\textnormal{Re}(s)>0$, which can be derived by using the Euler product representation of $\zeta(s)$, taking the logarithm, and applying M{\"o}bius inversion (Glaisher 1891, Fr{\"o}burg 1968).
\begin{equation}
P(s)=\log\zeta(s)+\sum_{n=2}^{\infty}\frac{\mu(n)}{n}\log\zeta\left(n s\right), \label{primezetamobius}
\end{equation}
where $\mu(n)$ denotes the M{\"o}bius function. We next note from the Dirichlet series representation of $\zeta(n+in\Delta)$ that for $n\geq 2$, $\left|\zeta(n+in\Delta)-1\right|\leq \left|\zeta(n)-1\right|<1$. Therefore by the alternating series expansion of the logarithm,
\begin{equation}
\left|\log \zeta(n+in\Delta)\right|=\left|\sum_{k=1}^{\infty}\frac{(-1)^{k+1}}{k}\left(\zeta(n+in\Delta)-1\right)^{k}\right| < \left|\zeta(n+in\Delta)-1\right| \leq \zeta(n)-1. \label{prop21step1}
\end{equation}
Then by (\ref{prop21step1}) and since $\mu(n)\in \left\{-1,0,1\right\}$ for all $n$, we have
\begin{equation}
\left|\sum_{n=2}^{\infty}\frac{\mu(n)}{n}\log\zeta(n+in\Delta) \right|< \sum_{n=2}^{\infty}\frac{\left|\log\zeta(n+in\Delta) \right|}{n}<\sum_{n=2}^{\infty}\frac{\zeta(n)-1}{n}=1-\gamma. \label{prop21step2}
\end{equation}
(\ref{prop21step2})'s last equality can be shown by using the integral definition of $\zeta(n)$ for $n>1$ along with $\Gamma(n)$'s factorial and integral definition to write 
\begin{eqnarray}
\sum_{n=2}^{\infty}\frac{\zeta(n)-1}{n}=\sum_{n=2}^{\infty}\frac{1}{n!}\int_{0}^{\infty}x^{n-1}\left(\frac{1}{e^{x}-1}-\frac{1}{e^{x}}\right)dx=\int_{0}^{\infty}\frac{e^{x}-1-x}{xe^{x}\left(e^{x}-1\right)}dx \nonumber \\=\int_{0}^{\infty}\frac{dx}{e^{x}}-\int_{0}^{\infty}\left(\frac{1}{e^{x}-1}-\frac{1}{xe^{x}}\right)dx = 1-\gamma\label{sweetlawd} 
\end{eqnarray}
since (\ref{sweetlawd})'s last integral is an identity for $\gamma$ (Whittaker \& Watson 1990). Applying (\ref{prop21step2}) in (\ref{primezetamobius}) with $s=1+i\Delta$ completes the proof.
\end{proof}
\hspace{-.6cm}The results (\ref{easyprop2}) and (\ref{zetaconnectionreal}) show that, with large $t$, (\ref{logzetao1})'s covariance function (\ref{logzetavar}) is closely approximated by $\log\left| \zeta\left(1+i\Delta\right)\right|$, which creates negative covariance at separations $\Delta$ approximately equal to the imaginary parts of the Riemann zeros like that seen in Figure \ref{fig2}.

We next apply the formula (\ref{primezetamobius}) to the critical line to note that
\begin{equation} 
\textnormal{Re}P(1/2+i\tau)=\log \left|\zeta (1/2+i\tau)\right|+\sum_{n=2}^{\infty}\frac{\mu(n)}{n}\log \left|\zeta\left(n(1/2+i\tau)\right)\right|. \label{gotit}
\end{equation}
Recall that $\zeta(s)$ has no zeros with $\textnormal{Re}(s)\geq 1$ (Hadamard, de la Vall{\'e}e Poissin 1896) and its only pole is at $s=1$. Also, one can easily show that the terms of the series on (\ref{gotit})'s far right-hand side are $O\left(\frac{1}{n2^{n/2}}\right)$ as $n\rightarrow \infty$ and hence the series is convergent for any $\tau \neq 0$. Therefore, for $\tau \neq 0$, the singularities of (\ref{gotit}) unambiguously define the positions of $\zeta(1/2+i\tau)$'s zeros. With this context, we next consider the following limit theorem, which nearly completes our derivation of $\zeta(1/2+i\tau)$'s zero-difference repulsion effect.
\begin{thm}
Suppose $\tau$ is uniformly distributed in $[T,2T]$ with $T\rightarrow \infty$ and $\Delta$ is uniformly distributed in $\left(0,\tau \right]$. Then
\begin{equation}
p\left(\textnormal{Re}P\left(1/2+i\left(\tau+\Delta\right)\right)\middle \vert \zeta(1/2+i\tau)=0\right) \rightarrow \mathcal{N}\left(-\log \left|\zeta(1+i\Delta)\right|+\varepsilon(\Delta), \frac{1}{2}\log\log \tau\right) \label{fin!}
\end{equation}
where $p(.|.)$ denotes the conditional probability density function and $\left|\varepsilon\left(\Delta\right)\right|<1-\gamma \approx .422784.$ \label{mainresult}
\end{thm}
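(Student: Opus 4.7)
The plan is to extract the stated conditional law from the joint Gaussian structure of
\[
(X_t,Y_t) \;=\; \bigl(\textnormal{Re}P_t(1/2+i\tau),\;\textnormal{Re}P_t(1/2+i(\tau+\Delta))\bigr)
\]
under the truncation $t=\tau^{X(\tau)}$ from (\ref{thankgod}), and then to translate $\zeta(1/2+i\tau)=0$ into a condition on $X_t$ via (\ref{gotit}). First I would upgrade Lemma \ref{propnormal} to joint asymptotic normality. Expanding the joint characteristic function $E\{\exp(i\lambda_1 X_t+i\lambda_2 Y_t)\}$ with (\ref{besselidsin})--(\ref{besselidcos}) produces a product of Bessel factors indexed by primes, and the unique-prime-factorization argument from Theorem \ref{thm0} kills every term whose prime exponents do not vanish identically. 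A Lyapunov bound paralleling (\ref{lyapunov}) applied to the vector-valued prime sum then yields bivariate normality, with diagonal variance $\tfrac{1}{2}\log\log t + O(1)$ from (\ref{appstatvar2}) and off-diagonal covariance $R_t(\Delta) = \tfrac{1}{2}\textnormal{Re}P_t(1+i\Delta)$ from (\ref{logzetavar}).

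Next I would apply the standard bivariate Gaussian conditioning formula: for zero-mean jointly Gaussian $(X,Y)$ with common variance $v$ and covariance $c$,
\[
Y \mid X = x \;\sim\; \mathcal{N}\!\left(\tfrac{c}{v}\,x,\;v - \tfrac{c^2}{v}\right).
\]
By Lemmas \ref{lemma1} and \ref{lemma2}, $c = R(\Delta) \to \tfrac{1}{2}\bigl(\log|\zeta(1+i\Delta)|-\varepsilon(\Delta)\bigr)$ in probability, while the truncation $t=\tau^{X(\tau)}$ gives $v = \tfrac{1}{2}\log\log\tau\,(1+o(1))$. Hence $c^2/v\to 0$, so the conditional variance collapses to $\tfrac{1}{2}\log\log\tau\,(1+o(1))$, matching the variance in (\ref{fin!}), and the conditional mean becomes
\[
\frac{c}{v}\,x \;=\; \frac{\log|\zeta(1+i\Delta)|-\varepsilon(\Delta)}{\log\log\tau}\,x.
\]

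The third step is to interpret $\zeta(1/2+i\tau)=0$ as a constraint on $x$. By (\ref{gotit}) together with the estimate (\ref{prop21step2}), $\textnormal{Re}P(1/2+i\tau) = \log|\zeta(1/2+i\tau)|+E(\tau)$ with $|E(\tau)|<1-\gamma$, so a Riemann zero drives $x\to -\infty$ entirely through $\log|\zeta(1/2+i\tau)|$. For the product $(c/v)\,x$ to land at the advertised mean $-\log|\zeta(1+i\Delta)|+\varepsilon(\Delta) = -\textnormal{Re}P(1+i\Delta)$, one needs $x/\log\log\tau\to -1$. This is precisely the natural resolution scale of the truncated series: since primes $p\leq \tau^{X(\tau)}$ resolve $\log|\zeta|$ only at scale $1/\log t$, the truncated $\textnormal{Re}P_t(1/2+i\tau)$ at a Riemann zero saturates at order $-\log\log\tau$, producing the required cancellation against the $1/\log\log\tau$ in the factor $c/v$.

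The main obstacle is making this saturation rigorous: the event $\zeta(1/2+i\tau)=0$ has measure zero and formally sends $x$ to $-\infty$, yet we need a finite-order conditional mean. Two approaches look natural. One is to condition on the \emph{approximate} zero event $|\zeta(1/2+i\tau)|\leq \epsilon$, apply bivariate Gaussian conditioning at the truncated level, and send $\epsilon\to 0$ jointly with $t\to \infty$ along a path compatible with (\ref{thankgod}). The other is a Fyodorov--Keating-type extreme-value analysis showing that $\textnormal{Re}P_{\tau^{X(\tau)}}(1/2+i\tau)$ concentrates at order $-\log\log\tau$ as $\tau$ ranges over Riemann zeros, so that the substitution $x\sim -\log\log\tau$ is legitimate. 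Controlling this double limit, and keeping the Möbius tail $E(\tau)$ from (\ref{gotit}) contributing only $\varepsilon(\Delta)$ rather than a shifted $O(1)$ term, is the crux of the argument.
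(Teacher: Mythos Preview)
Your proposal follows essentially the same route as the paper: bivariate Gaussian conditioning on $(\textnormal{Re}P_t(1/2+i\tau),\,\textnormal{Re}P_t(1/2+i(\tau+\Delta)))$ with covariance $\tfrac{1}{2}\textnormal{Re}P_t(1+i\Delta)$ and variance $\tfrac{1}{2}P_t(1)$, then Lemmas \ref{lemma1}--\ref{lemma2} to identify the covariance with $\log|\zeta(1+i\Delta)|-\varepsilon(\Delta)$, and (\ref{gotit}) to read off the zero condition. The one substantive divergence is your third step. The paper does \emph{not} argue that the truncated sum saturates at $-\log\log\tau$, nor does it condition on an approximate zero event or invoke Fyodorov--Keating extreme-value machinery. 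Instead it passes via (\ref{thankgod}) all the way to the untruncated quantities, so the conditional mean becomes $\textnormal{Re}P(1+i\Delta)\cdot\textnormal{Re}P(1/2+i\tau)/\lim_{t\to\infty}P_t(1)$, and then resolves the indeterminate form by matching the two logarithmic singularities directly: $P(1)=\log\zeta(1)+O(1)=-\lim_{x\to 0^+}\log x+O(1)$ from (\ref{primezetamobius}) and the Laurent expansion of $\zeta$ at $1$, while at a Riemann zero $\textnormal{Re}P(1/2+i\tau)=\log|\zeta(1/2+i\tau)|+\text{l.t.}=\lim_{x\to 0^+}\log x+\text{l.t.}$ from (\ref{gotit}), so the ratio is formally $-1$. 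This is considerably more direct than the mechanisms you propose, though correspondingly more heuristic---the paper treats the matched-singularity cancellation as a formal limit rather than via a controlled double limit, so the obstacle you carefully flagged is in effect absorbed into that step rather than resolved by additional analysis.
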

\begin{proof}
By Lemma \ref{propnormal}, both $\textnormal{Re}P_{t}(1/2+i\tau)$ and $\textnormal{Re}P_{t}\left(1/2+i\left(\tau+\Delta\right)\right)$ are normally and identically distributed as $t \rightarrow \infty$. We therefore use the following conditional mean formula for normally and identically distributed $x$ and $y$ with $E\left\{x\right\}=E\left\{y\right\}=0$:
\begin{equation}
E\left\{x|y\right\}=\textnormal{corr}\left\{x,y\right\}\times \frac{\textnormal{var}\left\{x\right\}}{\textnormal{var}\left\{y\right\}}\times y=\frac{\textnormal{cov}\left\{x,y\right\}\times y}{\textnormal{var}\left\{y\right\}} \label{normalgenform}
\end{equation}
Using (\ref{logzetavar}) to provide the covariance and variance terms in (\ref{normalgenform}) then gives
\begin{equation}
E\left\{\textnormal{Re}P_{t}(1/2+i(\tau+\Delta)))|\textnormal{Re}P_{t}(1/2+i\tau)\right\}=\frac{\textnormal{Re}P_{t}\left(1+i\Delta\right)\times\textnormal{Re}P_{t}(1/2+i\tau)}{P_{t}\left(1\right)}.  \label{condexp1}
\end{equation}
We next apply the bivariate normal conditional variance formula:
\begin{equation}
\textnormal{var}\left\{x|y\right\}=\textnormal{var}\left\{x\right\}\left[1-\left(\frac{\textnormal{cov}\left\{x,y\right\}}{\textnormal{var}\left\{x\right\}}\right)^{2}\right]=\textnormal{var}\left\{x\right\}-\frac{\left(\textnormal{cov}\left\{x,y\right\}\right)^{2}}{\textnormal{var}\left\{x\right\}}
\label{normalgenvar}
\end{equation}
Again using (\ref{logzetavar}) then gives
\begin{equation}
\textnormal{var}\left\{\textnormal{Re}P_{t}\left(1/2+i\left(\tau+\Delta\right)\right)|\textnormal{Re}P_{t}(1/2+i\tau)\right\}=\frac{1}{2}\left(P_{t}(1)-\frac{\left(\textnormal{Re}P_{t}\left(1+i\Delta\right)\right)^{2}}{P_{t}(1)}\right). \label{condvar}
\end{equation}
We next make the important note from (\ref{thankgod}) that (\ref{condexp1}) and (\ref{condvar}) are asymptotically equal to the conditional mean and variance for the full prime zeta function. More precisely, for $0<\Delta \leq \tau$ and $t=\tau^{1/(\log\log\log \tau)^{2}}$, as $\tau \rightarrow \infty$,
\begin{equation}
E\left\{\textnormal{Re}P_{t}(1/2+i(\tau+\Delta)))|\textnormal{Re}P_{t}(1/2+i\tau)\right\} \rightarrow E\left\{\textnormal{Re}P(1/2+i(\tau+\Delta)))|\textnormal{Re}P(1/2+i\tau)\right\} \label{thankgodmean}
\end{equation}
and
\begin{equation}
\textnormal{var}\left\{\textnormal{Re}P_{t}\left(1/2+i\left(\tau+\Delta\right)\right)|\textnormal{Re}P_{t}(1/2+i\tau)\right\}\rightarrow \textnormal{var}\left\{\textnormal{Re}P\left(1/2+i\left(\tau+\Delta\right)\right)|\textnormal{Re}P(1/2+i\tau)\right\}. \label{thankgodvariance}
\end{equation}
We then take the limit of (\ref{condexp1}) with $t=\tau^{1/\left(\log\log\log\tau\right)^{2}}$, $\tau \rightarrow \infty$, and with $\Delta$ uniformly distributed in $\left(0,\tau\right]$, using (\ref{thankgodmean}) as well as (\ref{easyprop2}) to write
\begin{equation}
E\left\{\textnormal{Re}P(1/2+i(\tau+\Delta)))|\textnormal{Re}P(1/2+i\tau)\right\} \xrightarrow[]{p}\frac{\textnormal{Re}P\left(1+i\Delta\right)\times\textnormal{Re}P(1/2+i\tau)}{\lim_{t\rightarrow \infty}P_{t}\left(1\right)}.  \label{condexp2}
\end{equation}
Note that taking $t \rightarrow \infty$ with $P_{t}(1)$ produces the singularity $P(1)=\log \zeta(1)+O(1)$ by (\ref{primezetamobius}) with $s=1$. Then noting the Laurent expansion of $\zeta(x)$ gives
$$\zeta(1)=\lim_{x\rightarrow 1^{+}}\frac{1}{x-1}=\lim_{x \rightarrow 0^{+}} \frac{1}{x}.$$
Applying this to (\ref{condexp2}) results in
\begin{eqnarray}
E\left\{\textnormal{Re}P(1/2+i(\tau+\Delta)))|\textnormal{Re}P(1/2+i\tau)\right\}\xrightarrow[]{p}\frac{\textnormal{Re}P\left(1+i\Delta\right)\times\textnormal{Re}P(1/2+i\tau)}{\lim_{x \rightarrow 0^{+}}\log \frac{1}{x}+O(1)}.
\label{condexp1.5}
\end{eqnarray}
For any finite value of $\textnormal{Re}P(1/2+i\tau)$, (\ref{condexp1.5})'s right-hand side will therefore vanish as $\tau \rightarrow \infty$. However, by (\ref{gotit}), at the Riemann zeros we have
\begin{eqnarray}
E\left\{\textnormal{Re}P(1/2+i(\tau+\Delta)))|\zeta(1/2+i\tau)=0\right\}\xrightarrow[]{p}\textnormal{Re}P\left(1+i\Delta\right)\frac{\lim_{x \rightarrow 0^{+}}\log x+\textnormal{l.t.}}{-\lim_{x \rightarrow 0^{+}}\log x+O(1)}\nonumber \\=-\textnormal{Re}P\left(1+i\Delta\right)=-\log \left|\zeta(1+i\Delta)\right|+\varepsilon(\Delta) \label{condexp2}
\end{eqnarray}
from Lemma \ref{zeta1cov}. We next take the limit of (\ref{condvar}) as $t \rightarrow \infty$. We first consider (\ref{condvar})'s leading term as well as the denominator of its second term, recalling Merten's 2nd theorem to write 
\begin{equation}
\textnormal{var}\left\{\textnormal{Re}P_{t}\left(1/2+i\left(\tau+\Delta\right)\right)|\textnormal{Re}P_{t}(1/2+i\tau)\right\} \rightarrow \frac{1}{2}\left(\log\log t-\frac{\left(P_{t}\left(1+i\Delta\right)\right)^{2}}{\log\log t+O(1)}\right)+O(1). \label{condvar2}
\end{equation}
We again use (\ref{easyprop2}) and note from (\ref{primezetamobius}) that, for $\Delta>0$, $P\left(1+i\Delta\right)$ has no singularities, only taking finite values. Therefore (\ref{condvar2})'s second term vanishes as $t \rightarrow \infty$. We hence take the limit of (\ref{condvar}) with $t=\tau^{1/\left(\log\log\log\tau\right)^{2}}$, $\tau \rightarrow \infty$, and with $\Delta$ uniformly distributed in $\left(0,\tau\right]$, using (\ref{thankgodvariance}) and (\ref{easyprop2}) to give
\begin{equation}
\textnormal{var}\left\{\textnormal{Re}P\left(1/2+i\left(\tau+\Delta\right)\right)|\textnormal{Re}P(1/2+i\tau)\right\} \xrightarrow[]{p}\frac{1}{2}\log\log \tau+o\left(\log\log \tau\right). \label{condvarfinal}
\end{equation}
From the conditional mean and variance results (\ref{condexp2}) and (\ref{condvarfinal}) as well as the asymptotic normality from Lemma \ref{propnormal} and (\ref{thankgod}) we therefore conclude (\ref{fin!}).
\end{proof}
(\ref{condexp1})-(\ref{condexp2}) shows that $\textnormal{Re}P(1/2+i(\tau+\Delta))$'s conditional expectation asymptotically vanishes at all $\tau \neq 0$ unless $\zeta(1/2+i\tau)=0$, where it then becomes closely approximated by $-\log \left|\zeta(1+i\Delta)\right|$. At such values of $\tau$, this results in $\textnormal{Re}P(1/2+i(\tau+\Delta))$'s conditional expectation having maxima at $\Delta$ approximately equal to the imaginary parts of Riemann zeros. The positions $\Delta$ of these maxima in the mean of (\ref{fin!}) imply a decreased probability that $\textnormal{Re} P\left(1/2+i(\tau+\Delta)\right)<L$ for any $L$. By (\ref{gotit}), this implies a decreased probability that $\left|\zeta\left(1/2+i(\tau+\Delta)\right)\right|=0$. The result (\ref{fin!}) thus predicts the zero-difference repulsion effect. 

A visualization is given in Figure \ref{fig3}, where (\ref{fin!})'s probability that $\textnormal{Re}P\left(1/2+i\left(\tau+\Delta\right)\right)$ takes an extreme negative value is graphed for $\tau$ given by the hundred thousandth Riemann zero ordinate. It is clear that the likelihood of such an extreme negative value, which is a necessary condition for $\zeta\left(1/2+i\left(\tau+\Delta\right)\right)$ to vanish by (\ref{gotit}), is much higher for regions of $\Delta$ away from the imaginary parts of the Riemann zeros. We lastly note that (\ref{fin!}) becomes the uniform distribution as $\tau \rightarrow \infty$. Hence this effect weakens higher up the critical line. 

\begin{figure}
  \includegraphics[width=\linewidth]{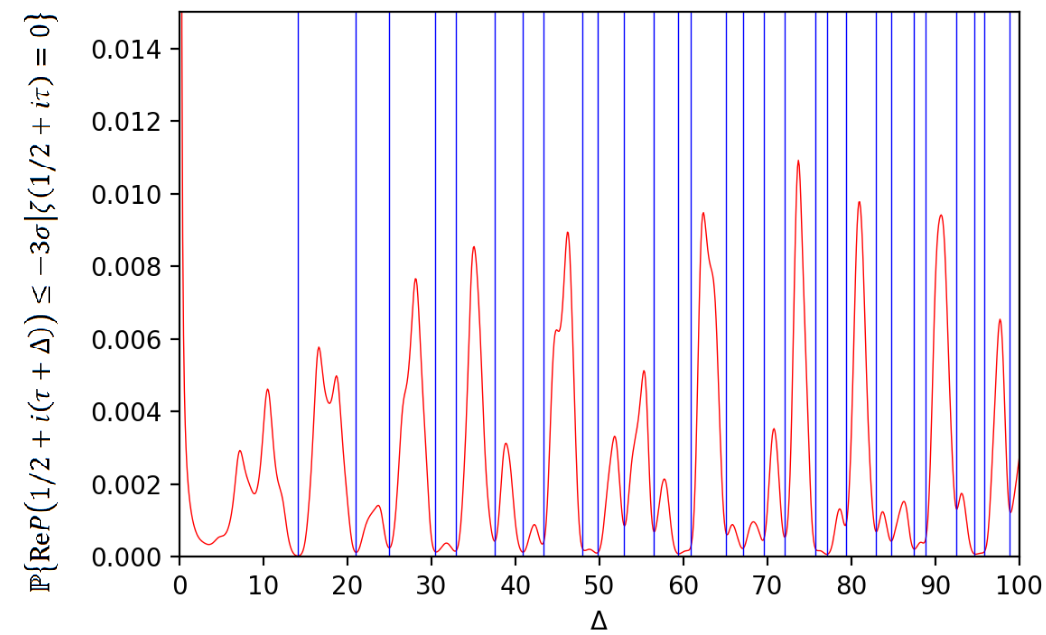}
  \caption{A graph of $\PP\left\{\textnormal{Re}P\left(1/2+i\left(\tau+\Delta\right)\right)\leq -3\sigma \middle \vert \zeta\left(1/2+i\tau\right)=0\right\}$, evaluated by integrating (\ref{fin!}), with $0<\Delta \leq 100$ and $\sigma=\frac{1}{2}\log\log \tau$, where $\tau$ is the imaginary part of the 100,000th Riemann zero.}
  \label{fig3}
\end{figure}

\section{Generalization to L-Function Zeros}
Perez Marco (2011) showed that a very similar effect occurs in many L-functions. In particular, he showed that the differences of the complex zeros of the L-functions $L\left(\chi_{3},s\right)$, $L\left(\chi_{4},s\right)$, and $L\left(\chi_{7,3},s\right)$ also seem to repel the Riemann zeros, indicating a connection between Riemann zeta and L-function zeros. We may explain this using our methodology by defining the following prime Dirichlet series
\begin{equation}
P_{\chi}(s)=\sum_{p}\frac{\chi(p)}{p^{s}} \label{primedirseries}
\end{equation}
for a given Dirichlet character $\chi$ s.t. $\chi(p)=e^{i\theta_{p}}$. With $s=1/2+i\tau$, Theorem \ref{thm0} and Lemma \ref{propnormal}'s reasoning may be applied to show that the real and imaginary parts of (\ref{primedirseries}) are normally distributed. We next note that for any Dirichlet character modulo $n$, $\chi_{n}$, where $n$ has prime factorization,
\begin{equation}
n=\prod_{p} p^{m_{p}}, \label{chibase}
\end{equation}
$\chi_{n}(p)=0$ for any $p$ such that $m_{p}\neq 0$, while for any $p$ with $m_{p}=0$, $\left|\chi_{n}(p)\right|=1$.
Therefore we may apply Theorem \ref{thm0} to show that, for $s=1/2+i\tau$, the covariance function for (\ref{primedirseries})'s real and imaginary parts  corresponding to the Dirichlet character $\chi_{n}$ have the same form as (\ref{logzetavar})-(\ref{zetaconnectionreal}) with  
\begin{equation}
\left|\varepsilon_{\chi_{n}}(\Delta)\right|<1-\gamma+\sum_{\substack{p\\m_{p}\neq0}}\frac{1}{p}. \label{chierror}
\end{equation}
For $\chi_{3}$, $\chi_{4}$, and $\chi_{7,3}$ these error bounds are relatively small since the only prime factors of these characters' moduli are 3, 2, and 7 respectively. Additionally, similar reasoning to that given for Lemma \ref{lemma1} shows that the mean squared difference between (\ref{primedirseries})-(\ref{chibase})'s covariance function and $P\left(1+i\Delta\right)$ is 
\begin{equation}
\frac{1}{2}\sum_{\substack{p\\m_{p}\neq 0}}\frac{1}{p^{2}}, \label{chimserror}
\end{equation}
which is quite small for $\chi_{3}$, $\chi_{4}$, and $\chi_{7,3}$ ($\approx$.056, .125, and .010 respectively). The error bound (\ref{chierror}) and the mean squared error (\ref{chimserror}) increase for moduli $n$ with a larger number of prime factors and are also larger for moduli with mostly small prime factors as opposed to moduli with only large prime factors. We lastly note that (\ref{primezetamobius}) may be generalized to give the following relationship between Dirichlet L-functions $L(\chi,s)$ and corresponding prime Dirichlet L-functions $P_{\chi}(s)$:
\begin{equation}
P_{\chi}(s)=\sum_{n=1}^{\infty}\frac{\mu(n)}{n}\log L\left(ns,\chi^{n}\right). \label{mobgen}
\end{equation}
Then a generalization of (\ref{thankgod}) for series of the form (\ref{primedirseries}), such as that provided from results of Hsu \& Wong (2019), enables derivation of an essentially equivalent result to (\ref{fin!}), which, by (\ref{mobgen}) and its corresponding generalization of (\ref{gotit}), has the same interpretation.

\section{Conclusion}
We have presented a derivation of zero-difference repulsion in the Riemann zeta function, which can also be applied to general L-functions. Our approach does not leverage the Riemann Hypothesis, Hardy-Littlewood conjecture, or other conjectures from number theory, and also does not directly use techniques from random matrix theory or quantum chaos, such as semi-classical trace formulae. Rather, our method relies on the statistical independence of the sequence $p^{i\tau}$ for all primes $p$, which is a direct consequence of the fundamental theorem of arithmetic. Thanks to this independence property in the prime numbers, we may invoke the central limit theorem and hence the properties of normally distributed random variables to study certain finite prime Dirichlet series. These series are asymptotically statistically equivalent to the full prime zeta function and hence our statistical results on the former may be applied to the latter. From there the only number theoretic concept we require is a classical analytic continuation technique using the Euler product and M{\"o}bius inversion, which relates the prime zeta function to the logarithm of the Riemann zeta function.

These methods readily generalize for L-functions and perhaps they can generalize further. In particular, the structural similarity between the Riemann zeta and Selberg zeta functions, including the connection between the former's prime numbers and the latter's prime geodesics or periodic orbits, as well as their shared Euler product structure, suggests that a generalization of the above derivations could be performed for the Selberg zeta function on certain hyperbolic surfaces. This could then help identify similar long-range repulsion effects in the eigenvalues of some quantum chaotic systems.

\end{document}